\documentclass{amsart}
\usepackage{amsxtra}
\usepackage{mathrsfs}

\usepackage{amssymb,amsmath,amsthm,color}

\usepackage[bookmarksopen=true,bookmarksnumbered=true, bookmarksopenlevel=2, colorlinks=true,linkcolor=mblue,
citecolor=mblue,urlcolor=mblue, pdfstartview=FitH]{hyperref}

\definecolor{mblue}{rgb}{0,0,.8}

\newtheorem{theorem}{Theorem}

\newtheorem{remark}{Remark}

\newtheorem{prop}{Proposition}
\newtheorem{lemma}{Lemma}

\newcommand{\ZZ}{\ensuremath{\mathbb{Z}}}
\newcommand{\QQ}{\ensuremath{\mathbb{Q}}}
\newcommand{\QQbar}{\overline{\QQ}}

\newcommand{\PP}{\ensuremath{\mathbb{P}}}

\newcommand{\Gal}{\operatorname{Gal}}

\DeclareMathOperator{\Ind}{Ind}

\DeclareMathOperator{\Res}{Res}

\title{Quadratic twists of rigid Calabi-Yau threefolds over $\QQ$}

\author{Fernando Q.\ Gouv\^ea, Ian Kiming, Noriko Yui}

\address{F.\ Q.\ Gouv\^ea: Department of Mathematics, Colby College,
Waterville, ME 04901, USA.  E-mail: fqgouvea@colby.edu}

\address{I.\ Kiming: Department of Mathematics, University of
Copenhagen, Universitets\-parken 5, DK 2100 $\emptyset$, Copenhagen, Denmark.
E-mail: kiming@math.ku.dk}

\address{N.\ Yui: Department of Mathematics and Statistics,
Queen's University, Kingston, Ontario Canada K7L 3N6. Email:
yui@mast.queensu.ca}

\thanks{This work was partially supported by the Natural Sciences and
Engineering Research Council of Canada (NSERC), and by The Danish Council
for Independent Research}

\subjclass[2010]{14J32, 11F80, 11F11}

\begin{document}

\begin{abstract} We consider rigid Calabi--Yau threefolds defined over
  $\QQ$ and the question of whether they admit quadratic twists. We give a
  precise geometric definition of the notion of a quadratic twists in this
  setting. 
  
  Every rigid Calabi--Yau threefold over $\QQ$ is modular so there is
  attached to it a certain newform of weight $4$ on some $\Gamma_0(N)$. We
  show that quadratic twisting of a threefold corresponds to twisting the
  attached newform by quadratic characters and illustrate with a number of
  obvious and not so obvious examples.
  
  The question is motivated by the deeper question of which newforms of
  weight $4$ on some $\Gamma_0(N)$ and integral Fourier coefficients arise
  from rigid Calabi--Yau threefolds defined over $\QQ$ (a geometric
  realization problem).
\end{abstract}

\maketitle

\section{Introduction}\label{intro}

Suppose $X$ is a rigid Calabi--Yau threefold defined over $\QQ$. As
Gouv\^ea and Yui observe in \cite{GY} (see also \cite{Di2010},
\cite{DiMa}), it follows from work of Khare and Winterberger that $X$
is modular: The $L$-series of $X$ coincides with the $L$-series of a
certain newform $f$ of weight $4$ on some $\Gamma_0(N)$. Alternatively,
there is a newform $f$ with integer coefficients such that, for any prime
$\ell$, the $\ell$-adic representation of $G_{\QQ} = \Gal(\QQbar /\QQ)$ on
$H^3(\bar{X},\QQ_{\ell})$ is isomorphic to the $\ell$-adic representation
of $G_{\QQ}$ attached to $f$.

Very little seems to be known about the form $f$. Notably, the
relation between the conductor $N$ and the geometry of $X$ still seems
to be poorly understood. (See the discussions and conjectures of
section 6.4 of \cite{Meyer}, as well as the paper \cite{Di}.)

Another unresolved and probably very hard question is the following. The
form $f$ above obviously has integral Fourier coefficients. Can one
conversely characterize the newforms of weight $4$ on some $\Gamma_0(N)$
with integral coefficients that arise from rigid Calabi--Yau threefolds
over $\QQ$? Do all such forms arise from Calabi-Yau threefolds? (This is a
kind of the geometric realization problem. See \cite{elkieschuett} for
the case of ``singular'' K3 surfaces and forms of weight $3$.)

A very weak version of this question is the topic of this paper: Given
a rigid Calabi--Yau threefold $X$ with form $f$ as above, for any non-square
rational number $d$ there is a twist $f_d$ of $f$ by the quadratic character
corresponding to the quadratic extension $K=\QQ(\sqrt{d})$ over
$\QQ$. This $f_d$ is again of the above form and so we can ask whether
$f_d$ arises from a rigid Calabi--Yau threefold $X_d$ over $\QQ$. This
will be the case whenever $X$ admits a quadratic twist by $d$ in the
sense we discuss next.

\section{Quadratic twists of rigid Calabi--Yau threefolds}\label{twists}

Let $X$ be a rigid Calabi--Yau threefold defined over $\QQ$. Suppose
that $d\in\QQ^{\times}$ is a squarefree integer, let $K :=
\QQ(\sqrt{d})$, and let $\sigma$ be the non-trivial automorphism of
$K/\QQ$. We say that a rigid Calabi--Yau threefold $X_d$ defined over
$\QQ$ is a {\it twist of $X$ by $d$} if there exist:

\begin{itemize}
\item an involution $\iota$ of $X_{\QQ}$ that acts as $-1$ on
  $H^3(\bar{X},\QQ_{\ell})$ for some prime $\ell$, and
\item an isomorphism $\theta \colon ~(X_d)_{K}
  \stackrel{\cong}{\rightarrow} X_{K}$ defined over $K$
\end{itemize}
such that:
$$
\theta^{\sigma} \circ \theta^{-1} = \iota.
$$

Notice that the condition $\iota = \theta^{\sigma} \circ \theta^{-1}$
necessarily implies that the involution $\iota$ satisfies
$\iota^{\sigma} = \iota$, i.e., that $\iota$ is defined over $\QQ$.
Conversely, given an involution $\iota$ on $X_\QQ$, one can always
find the isomorphism $\theta$. One takes the quotient of $X_K=X_\QQ
\otimes K$ by $\iota\otimes\sigma$, checks that it is defined over
$\QQ$ and that it is the twist $X_d$ as above. This will become clear
in the examples below: whenever we can find the appropriate involution
we can also construct a twist.
\smallskip

Since $X$ is a rigid Calabi--Yau threefold, $H^{3,0}(X)$ is
one-dimensional so there is a unique (up to scalar) holomorphic
$3$-form $\Omega$ on $X$. The involution $\iota$ should act on
$\Omega$ non-symplectically, sending it to $-\Omega$. Conversely,
since $X$ is rigid we have $h^{2,1}(X)=0$, so if $\iota$ sends
$\Omega$ to $-\Omega$ we see that $\iota$ acts as $-1$ on all of
$H^3(\bar{X},\QQ_{\ell})$. (Here the rigidity of $X$ is used in an
essential way.)

This is the method that we will primarily employ in the examples below
to ensure this part of the condition on the involution $\iota$.

\smallskip

One could envision relaxing the above definition in the direction of
just requiring the existence of an algebraic correspondence between
$(X_d)_{K}$ and $X_{K}$ and still retain (a somewhat stronger version
of) the theorem below. However, in the examples that we will give, we
actually find isomorphisms in all cases and have hence chosen to work
with the above definition.

The principles of proof of the following theorem should be well-known,
but we provide the details because of lack of a precise reference.

Recall that, given a newform $f$ of some weight and a non-square
$d\in\QQ$ there is a twist $f_d$ of $f$ by $d$ which is again a
newform of the same weight as $f$ (but potentially at another level)
and whose attached $\ell$-adic Galois representation (for some prime
$\ell$ and hence for all primes $\ell$) is isomorphic to the
$\ell$-adic representation attached to $f$ twisted by the quadratic
character $\chi$ corresponding to $K/\QQ$. If the Fourier coefficients
of $f$ and $f_d$ are $a_n$ and $b_n$, respectively, we have the
relation $b_p = \chi(p) a_p$ for almost all primes $p$. In particular,
since $\chi$ is quadratic, if $f$ has coefficients in $\ZZ$ then so
does $f_d$.

\begin{theorem}\label{thm_twist}
In the above setting, suppose that the newform (of weight $4$)
attached to $X$ is $f$. Then, if $X_d$ is a twist by $d$ of $X$ the
newform attached to $X_d$ is $f_d$, the twist of $f$ by the Dirichlet
character $\chi$ corresponding to the quadratic extension $K =
\QQ(\sqrt{d})$ of $\QQ$.

If we keep all hypotheses above except possibly that $\iota$ acts as
$-1$ on $H^3(\bar{X},\QQ_{\ell})$, we can still deduce that the
newform attached to $X_d$ is either $f$ or $f_d$.
\end{theorem}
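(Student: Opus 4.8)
The plan is to translate the entire statement into one about the $\ell$-adic Galois representations and to recover the newform of $X_d$ from the isomorphism class of $H^3(\overline{X_d},\QQ_{\ell})$ as a $G_{\QQ}$-module. Write $V=H^3(\bar{X},\QQ_{\ell})$ and $V_d=H^3(\overline{X_d},\QQ_{\ell})$, with representations $\rho\colon G_{\QQ}\to\GL(V)$ and $\rho_d\colon G_{\QQ}\to\GL(V_d)$. Since $X$ is rigid, $\dim V=2$ and, by modularity, $V\cong\rho_f$ is \emph{absolutely} irreducible; likewise $V_d\cong\rho_{f'}$ for some weight-$4$ newform $f'$ attached to $X_d$. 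The whole content is then the identification of $V_d$, up to $G_{\QQ}$-isomorphism, with either $\rho_f$ or its quadratic twist $\rho_f\otimes\chi\cong\rho_{f_d}$; strong multiplicity one forces $f'\in\{f,f_d\}$, and $f'=f_d$ under the stronger hypothesis.

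First I would use $\theta$ to produce the comparison map. Base-changing $\theta\colon(X_d)_K\to X_K$ to $\QQbar$ and pulling back on cohomology gives an isomorphism $\Theta:=\theta^{*}\colon V\stackrel{\sim}{\to}V_d$. Functoriality of the $G_{\QQ}$-action on étale cohomology yields, for every $g\in G_{\QQ}$, the relation $\rho_d(g)\circ\Theta=(\theta^{g})^{*}\circ\rho(g)$, where $\theta^{g}$ denotes the Galois conjugate of $\theta$ (obtained by applying $g|_K$ to the data defining $\theta$). For $g\in G_K$ one has $\theta^{g}=\theta$, so $\Theta$ is $G_K$-equivariant; for $g$ restricting to $\sigma$ on $K$ one has $\theta^{g}=\theta^{\sigma}=\iota\circ\theta$, whence $(\theta^{g})^{*}=\Theta\circ I$ with $I:=\iota^{*}$ the action of $\iota$ on $V$. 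Writing $\chi$ for the quadratic character of $K/\QQ$, the two cases combine into the single relation
$$\rho_d(g)\circ\Theta \;=\; \Theta\circ I^{(1-\chi(g))/2}\circ\rho(g)\qquad(g\in G_{\QQ}),$$
the exponent being $0$ or $1$. When $I=-1$ this reads $\rho_d(g)=\chi(g)\,\Theta\,\rho(g)\,\Theta^{-1}$, i.e. $V_d\cong V\otimes\chi\cong\rho_{f_d}$, which proves the first assertion.

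For the final statement I would drop the hypothesis $I=-1$ and instead pin down $I$ intrinsically. Since $\iota=\theta^{\sigma}\circ\theta^{-1}$ is defined over $\QQ$, the involution $I=\iota^{*}$ commutes with $\rho(g)$ for all $g\in G_{\QQ}$; as $\rho_f$ is absolutely irreducible, Schur's lemma forces $I$ to be a scalar, and $I^{2}=1$ gives $I=\pm1$ (this is exactly the statement that $\iota$ acts by $\pm1$ on the one-dimensional $H^{3,0}$ and hence, by rigidity, on all of $V$). In the case $I=+1$ the displayed relation becomes genuine $G_{\QQ}$-equivariance, so $V_d\cong V\cong\rho_f$ and $f'=f$; in the case $I=-1$ we recover $f'=f_d$ as above. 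The one delicate point, and the reason a careful treatment is warranted, is the semilinear bookkeeping behind the functoriality relation $\rho_d(g)\circ\Theta=(\theta^{g})^{*}\circ\rho(g)$ --- keeping track of how $G_{\QQ}$ permutes the two $K$-models and how conjugation by $\sigma$ turns $\theta$ into $\iota\circ\theta$ --- since everything afterward is forced by Schur's lemma and strong multiplicity one.
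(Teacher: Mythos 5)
Your proposal is correct, and it reaches the theorem by a genuinely different route than the paper's own proof. Both arguments reduce everything to proving $\rho_d\cong\rho\otimes\chi$ (resp.\ $\rho_d\cong\rho$ or $\rho\otimes\chi$), and both start from the intertwiner induced by $\theta$ together with the conjugation formula $(\theta^{\sigma})^{*}=\theta^{*}\circ\iota^{*}$. The paper, however, uses only the $G_K$-equivariance of this intertwiner: setting $\rho'=A^{-1}\rho A$, it shows that $\rho'(g)^{-1}\rho_d(g)$ centralizes $\rho'(G_K)$, and is then forced into a case division. If $\rho|_{G_K}$ is absolutely irreducible, Schur's lemma yields a character $\mu$ trivial on $G_K$, hence $\mu\in\{1,\chi\}$, and nontriviality of the involution $\rho(\sigma)A\rho_d(\sigma)^{-1}A^{-1}$ excludes $\mu=1$; if $\rho|_{G_K}$ is not absolutely irreducible, the paper invokes Ribet's dihedral classification \cite{ribet} to write $\rho'=\Ind_{K/\QQ}(\psi)$, concludes $\rho_d=\rho'$ so the newform is $f$ (which coincides with $f_d$ in this dihedral case), and checks that this branch cannot occur when $\iota^{*}=-1$. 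You instead retain the full semilinear $G_{\QQ}$-functoriality $\rho_d(g)\circ\Theta=(\theta^{g})^{*}\circ\rho(g)$ and apply Schur's lemma not to the discrepancy character but to $I=\iota^{*}$ itself: since $\iota$ is defined over $\QQ$, $I$ centralizes $\rho(G_{\QQ})$, and absolute irreducibility of $\rho\cong\rho_f$ over $G_{\QQ}$ (Ribet's irreducibility theorem for newforms of weight $\geq 2$) forces $I=\pm 1$, after which your single displayed relation settles both assertions at once, with no dihedral case analysis whatsoever. The trade-off is clear: your argument is more uniform and makes the dichotomy transparently the sign of $I$, but it imports absolute irreducibility of $\rho_f$ over $G_{\QQ}$ as an external (standard but nontrivial) input, whereas the paper's first case needs only an irreducibility hypothesis on the restriction to $G_K$ and its second case replaces that by the dihedral classification. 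Note finally that your Schur-lemma step is the Galois-theoretic counterpart of the paper's Hodge-theoretic remark that rigidity ($h^{2,1}=0$) forces $\iota$ to act on all of $H^3$ by the single sign by which it acts on $H^{3,0}$; either route legitimately pins down $I$, and your bookkeeping of the semilinear action, the one point you flag as delicate, is indeed the standard functoriality relation and is stated correctly.
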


\begin{proof}
Fix a prime number $\ell$, and consider the $\ell$-adic Galois
representations $\rho$ and $\rho_d$ attached to $X$ and $X_d$,
respectively: these are given by the action of $G_{\QQ} = \Gal(\QQbar
/\QQ)$ on the $2$-dimensional $\QQ_{\ell}$-vector spaces $V :=
H^3(\bar{X},\QQ_{\ell})$ and $V_d := H^3(\bar{X_d},\QQ_{\ell})$,
respectively.

Now, the newform $f$ attached to $X$ is determined uniquely by the
requirement that its attached $\ell$-adic representation be isomorphic
to $\rho$. Similarly, the newform attached to $X_d$ is determined by
the requirement that its attached $\ell$-adic representation be
isomorphic to $\rho_d$.

Since the $\ell$-adic representation attached to $f_d$ is isomorphic
to the twist by $\chi$ of the one attached to $f$, we see that what we
have to prove boils down to:
$$
\rho_d \cong \rho \otimes \chi .
$$

Put $N := G_K = \Gal(\QQbar /K)$ so that $N$ is a normal subgroup of
$G_{\QQ}$. The existence of the isomorphism $\theta \colon ~(X_d)_{/K}
\cong X_{/K}$ defined over $K$ translates into the existence of a
$\QQ_{\ell}$-linear isomorphism $V_d \rightarrow V$ commuting with the
action of $G_K$. I.e., in matrix terms we have an invertible matrix
$A$ with:
$$
\rho(n) A = A \rho_d(n) \quad \mbox{for all $n\in N$}.
$$

In matrix terms the conjugate isomorphism $\theta^{\sigma}$ of $V_d$
onto $V$ is then given by the matrix
$$
\rho(\sigma) A \rho_d(\sigma)^{-1} ;
$$
notice that we have here viewed $\sigma$ as an element of $G_{\QQ}$
via choice of a representative; the expression $\rho(\sigma) A
\rho_d(\sigma)^{-1}$ does not depend on this choice.

If now $\iota$ acts as $-1$ on $H^3(\bar{X},\QQ_{\ell})$ we can deduce
that the matrix
$$
\rho(\sigma) A \rho_d(\sigma)^{-1} A^{-1}
$$
is a non-trivial involution.
\smallskip

Define the representation $\rho'$ of $G_{\QQ}$ by $\rho' := A^{-1}
\rho A$ so that $\rho'(n) = \rho_d(n)$ for $n\in N$. Then, for
arbitrary $g\in G_{\QQ}$ and $n\in N$ we have
\begin{eqnarray*}
\rho_d(g) \rho'(n) \rho_d(g)^{-1} &=& \rho_d(g) \rho_d(n) \rho_d(g)^{-1} \\
&=& \rho_d(gng^{-1}) = \rho'(gng^{-1}) = \rho'(g) \rho'(n) \rho'(g)^{-1}
\end{eqnarray*}
so that
$$
\rho'(g)^{-1} \rho_d(g) \rho'(n) = \rho'(n) \rho'(g)^{-1} \rho_d(g)
$$
i.e., for any $g\in G_{\QQ}$ the matrix $\rho'(g)^{-1} \rho_d(g)$
commutes with all matrices $\rho'(n)$, $n\in N$.

Now, suppose first that $\rho$ (and hence $\rho'$) is absolutely
irreducible when restricted to $N$. In that case we deduce that
$\rho'(g)^{-1} \rho_d(g)$ is a scalar matrix, say with diagonal entry
$\mu(g)$. We have $\mu(n) = 1$ for $n\in N$ and see that $g\mapsto
\mu(g)$ is in fact a character of $G_{\QQ}$ factoring through $N =
G_K$. So, either $\mu = 1$ or $\mu = \chi$.

If we had $\mu = 1$ we would have $A^{-1} \rho(g) A = \rho'(g) =
\rho_d(g)$ for all $g\in G_{\QQ}$ and so in particular the matrix
$$
\rho(\sigma) A \rho_d(\sigma)^{-1} A^{-1}
$$
would be trivial. As we noted above, this can not happen if $\iota$
acts as $-1$ on $H^3(\bar{X},\QQ_{\ell})$. Hence, in that case we must
have $\mu = \chi$ and so $\rho_d = \rho' \otimes \chi \cong \rho
\otimes \chi$, as desired.
\smallskip

Suppose now that $\rho$ is not absolutely irreducible when restricted
to $G_K$. The same is then true of $\rho'$ and $\rho_d$. In this case
it is known, cf.\ (4.4), (4.5) of \cite{ribet}, that $\rho'$ is
induced from the $\ell$-adic representation $\psi$ attached to a
Gr\"ossencharacter over $K$: $\rho' = \Ind_{K/\QQ}(\psi)$, and
$\rho'_{\mid G_K}$ splits up as the sum of the two characters $\psi$
and $\psi^{\sigma}$. Notice that $\Ind_{K/\QQ}(\psi) =
\Ind_{K/\QQ}(\psi^{\sigma})$. Since $\rho'$ and $\rho_d$ have the same
restriction to $G_K$ we may then conclude that in fact $\rho' =
\rho_d$ as representations of $G_{\QQ}$, and hence that the newform
attached to $X_d$ is $f$. Furthermore, the matrix $\rho(\sigma) A
\rho_d(\sigma)^{-1} A^{-1}$ must then be trivial, and so we see that
this case in fact does not materialize if $\iota$ acts as $-1$ on
$H^3(\bar{X},\QQ_{\ell})$.
\end{proof}

\begin{remark}
What we have proved, in fact, is that if $\iota$ is nontrivial the
Galois representation on the middle cohomology of $X_d$ is isomorphic
to the tensor product of the representation on the middle cohomology
of $X$ and the one-dimensional Galois representation corresponding to
$K=\QQ(\sqrt d)$. For rigid Calabi--Yau manifolds, we know these
representations correspond to modular forms, but the question can, of
course, be asked without knowing anything about modularity.  We are
grateful to the referee for pointing this out to us.

\end{remark}

\subsection{Easy examples of twists}

The standard, simple example of twisting is of course for an elliptic
curve $E$ over $\QQ$, say given by a Weierstrass equation $y^2 = x^3 +
ax^2 + bx + c$. The twisted curve $E_d$ is then given by the equation
$dy^2 = x^3 + ax^2 + bx + c$ with the isomorphism $\theta : ~ E_d
\rightarrow E$ defined over $K = \QQ(\sqrt{d})$ by $\theta (x,y) =
(x,\sqrt{d} y)$. The corresponding involution $\iota$ is
$(x,y) \mapsto (x,-y)$. It is clear that $\iota$ sends the holomorphic
$1$-form $\Omega =\displaystyle\frac{dx}{y}$ to $-\Omega$.
\smallskip

For a number of rigid Calabi--Yau threefolds over $\QQ$ we can display
twists by essentially the same method: Consider for examples the
various cases of double octic Calabi--Yau threefolds over $\QQ$ (see
\cite{Meyer} for instance for a good overview). They are defined as
hypersurfaces of the form
$$
y^2=f_8(x_1,x_2,x_3,x_4)
$$
where $f_8$ is a degree $8$ homogeneous polynomial. As in the case of
elliptic curves, we have an obvious twist given by
$$
dy^2=f_8(x_1,x_2,x_3,x_4) .
$$

The corresponding involution is of course again given by $y\mapsto
-y$. Again it is clear that $\iota$ sends the holomorphic $3$-form
$$
\Omega=\displaystyle\frac{\sum_{i=1}^4 (-1)^i x_i\, dx_1 \wedge\cdots
  \widehat{\wedge{dx_i}}\wedge\cdots\wedge dx_4}{y}
$$
to $-\Omega$.

This is also completely analogous to the case certain modular double
sextic $K3$ surfaces, see \cite{P}. These have form
$$
w^2=f_6(x,y,z)
$$
where $f_6$ is a projective smooth curve of degree $6$. As above, we
get a twist of this surface (in the sense analogous to Theorem
\ref{thm_twist}) via the twisted equation
$$
dw^2=f_6(x,y,z)
$$
for a non-square rational number $d$. The involution is again given by
$w\mapsto -w$. The holomorphic $2$-form
$$
\Omega=\displaystyle\frac{z\,dx\wedge dy-x\,dy\wedge dz+y\,dx\wedge dz}{w}
$$
is sent by $\iota$ to $-\Omega$.

\subsection{Self-fiber products of rational elliptic surfaces with
  section and their twists}\label{verrill}

Slightly more complicated examples arise in connection with the rigid
Calabi--Yau threefolds studied by H.\ Verrill in the appendix to
\cite{Y03}: She determined the $L$-series via the point counting
method for the six isomorphism classes of rigid Calabi--Yau threefolds
constructed as self-fiber products of rational elliptic surfaces with
section by Schoen \cite{S}. Along the way, she discussed twists by
quadratic characters.

These six rigid Calabi--Yau threefolds over $\QQ$ are defined as
follows: Start with semi-stable families of elliptic curves $\pi:
\mathcal{Y}\to\mathbb{P}^1$, i.e., $\mathcal{Y}$ is a smooth surface
and the singular fibers have type $I_m$. Beauville \cite{B} gave a
complete list of these families. These are realized as the resolutions
of singular surfaces
$\bar{\mathcal{Y}}\subset\mathbb{P}^2\times\mathbb{P}^1$ given by the
following equations:
$$
\begin{array}{ccl}
\# & \mbox{Equation for $\bar{\mathcal{Y}}$} & \\ \hline
I & (x^3+y^3+z^3)\mu & =\lambda xyz \\
II & x(x^2+z^2+2zy)\mu & =\lambda (x^2-y^2)z \\
III & x(x-z)(y-z)\mu & =\lambda (x-y)yz \\
IV & (x+y+z)(xy+yz+zx)\mu & =\lambda xyz \\
V & (x+y)(xy-z^2)\mu & =\lambda xyz \\
VI & (x^2y+y^2z+z^2x)\mu & =\lambda xyz \\
\end{array}
$$

The fibration $\bar\pi: \bar{\mathcal{Y}}\to\mathbb{P}^1$ is given by
projecting to $\mathbb{P}^1$, and $\mathcal{Y}$ is obtained by
resolving $\bar{\mathcal{Y}}$. Now take the self-fiber product
$\mathcal{Y}\times_{\mathbb{P}^1}\mathcal{Y}$. Schoen \cite{S} shows
that a small resolution exists and that the resulting smooth variety
$X$ is a rigid Calabi--Yau threefold defined over $\QQ$. Thus, in each
case there is a newform of weight $4$ attached to $f$. In each case,
the form was identified by Verrill via determination of the $L$-series
of $X$ (point counting.) Here is the table of newforms from Verrill.

$$\begin{array}{cccc}
\# & Newform & \mbox{modular group} & \mbox{level} \\ \hline
I & \eta(q^3)^8 & \Gamma(3) & 9 \\
II & \eta(q^2)^4\eta(q^4)^4 & \Gamma_1(4)\cap \Gamma(2) & 8 \\
III & \eta(q)^4\eta(q^5)^4 & \Gamma_1(5) & 5 \\
IV & \eta(q)^2\eta(q^2)^2\eta(q^3)^2\eta(q^6)^2 & \Gamma_1(6) & 6 \\
V & \eta(q^4)^{16}\eta(q^8)^{-4}\eta(q^2)^{-4} & \Gamma_0(8)\cap \Gamma_1(4)
& 16 \\
VI & \eta(q^3)^8 & \Gamma_0(9)\cap \Gamma_1(3) & 9 \\
\end{array}
$$

In each case, one can display a twist $X_d$ of $X$ so that $X_d$
corresponds to twisting the newform by the quadratic character
belonging to $\QQ(\sqrt{d})/\QQ$. Consider for instance type $V$
above. Given a non-square $d\in \QQ^{\times}$ let $X_d$ be the variety
arising from the equation
$$
(x+y)(xy-dz^2)\mu  =\lambda xyz
$$
by a process analogous to the one leading to $X$ above.

Then we have an isomorphism $\theta \colon X_d \rightarrow X$ defined
over $\QQ(\sqrt{d})$ and given by
$$
((x:y:z),(\mu:\lambda)) \mapsto ((\sqrt{d} x: \sqrt{d}
y:z),(\mu:\sqrt{d} \lambda)).
$$

In the setup of Theorem \ref{thm_twist}, the involution $\iota$ is
given by
$$
\iota((x:y:z),(\mu:\lambda)) = ((-x:-y:z),(\mu:-\lambda)) .
$$

That $X_d$ is a genuine twist of $X$, i.e., that the attached newform
is $f_d$ rather than $f$ can be ascertained via point counting,
cf.\ appendix in \cite{Y03}.

The other examples can be dealt with in similar fashions.

%
%
%

\subsection{The Schoen quintic and its quadratic twists}\label{schoen}

As a more interesting test case, we consider the Schoen quintic
$$
x_0^5 + x_1^5 + x_2^5 + x_3^5 + x_4^5 = 5x_0x_1x_2x_3x_4.
$$

We write
$$
f=f(x_0,x_1,x_2,x_3,x_4)=x_0^5+x_1^5+x_2^5+x_3^5+x_4^5-5x_0x_1x_2x_3x_4 = 0.
$$

This is a singular threefold with $125$ nodes (ordinary double points) as
only singularities, and a small resolution of singularities produces a
rigid Calabi-Yau threefold $X$ that is known, cf.\ \cite{S_quintic}, to be
associated to a newform of weight $4$ and level $25$ (the modular form
$25k4A1$); see also \cite{Meyer}, section 3.1.

We seek an involution $\iota$ of $X$ that acts on $H^{3,0}(X)$ as
multiplication by $-1$. Since $H^{3,0}(X)$ is generated by a unique
holomorphic $3$-form $\Omega$ (up to scalar), $\iota$ should send $\Omega$
to $-\Omega$.

To determine the action of $\iota$ on $\Omega$ we can use either of the
following two arguments:

According to Cox and Katz \cite{CK}, especially section 2.3 and the formula
(2.7) therein, $\Omega$ can be computed on the smooth part as
$$
\Omega=\mbox{Res}(\frac{\omega}{f})
$$
where
$$
\omega=\sum_{i=0}^4 (-1)^ix_i\, dx_0\cdots \wedge
\widehat{dx_i}\wedge\cdots dx_4
$$
and where `Res' denotes Poincar\'{e} residue.

Alternatively, it follows from Lemma \ref{diff_form} below that we have a
holomorphic $3$-form
$$
\frac{dx_0 \wedge dx_1 \wedge dx_2}{\partial f /\partial x_3}.
$$
on the Zariski open set where $x_4$ and $\partial f /\partial x_3$ are
both non-vanishing, and that this extends to the Calabi-Yau threefold $X$.
\bigskip

Can one construct the requisite quadratic twists of the Schoen quintic? In
Gouv\^ea and Yui \cite {GY} it was briefly asserted that quadratic twist
indeed exists for the Schoen quintic. We now discuss details of this claim.

\begin{prop}
For any non-square $d\in\QQ^{\times}$ the Schoen quintic has a twist
by $d$. The corresponding involution $\iota$ defined over $\QQ$ is
given explicitly on the coordinates by
$$
\iota : (x_0,x_1,x_2,x_3,x_4)\mapsto (x_1,x_0,x_2,x_3,x_4),
$$
and sends $\Omega\in H^{3,0}(X)$ to $-\Omega$.
\end{prop}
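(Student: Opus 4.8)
The plan is to verify two things: (1) that the proposed map $\iota$ is indeed an involution of $X$ that is defined over $\QQ$, and (2) that it sends the holomorphic $3$-form $\Omega$ to $-\Omega$. Once these are established, the Proposition follows immediately from the discussion preceding Theorem~\ref{thm_twist}: since $X$ is rigid we have $h^{2,1}(X)=0$, so an involution sending $\Omega$ to $-\Omega$ automatically acts as $-1$ on all of $H^3(\bar X,\QQ_\ell)$, and the remark after the definition guarantees that from such a $\QQ$-rational involution one can construct the twist $X_d$.

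First I would check that $\iota$ is well-defined on the quintic and is an involution. The map $(x_0,x_1,x_2,x_3,x_4)\mapsto(x_1,x_0,x_2,x_3,x_4)$ simply transposes the first two coordinates; it manifestly has order $2$ and is defined over $\QQ$. Since the defining polynomial $f=x_0^5+x_1^5+x_2^5+x_3^5+x_4^5-5x_0x_1x_2x_3x_4$ is symmetric in $x_0$ and $x_1$, the singular quintic is preserved by $\iota$. One must check that this descends to the small resolution $X$; since the $125$ nodes and the chosen small resolution are Galois-stable and compatible with the coordinate permutation, $\iota$ lifts to an automorphism of $X$, which remains an involution defined over $\QQ$.

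The heart of the argument is computing the action of $\iota$ on $\Omega$, and this is the step I expect to be the main obstacle, because the naive sign count is deceptive. Using the residue formula $\Omega=\operatorname{Res}(\omega/f)$ with $\omega=\sum_{i=0}^4(-1)^i x_i\,dx_0\cdots\widehat{dx_i}\cdots dx_4$, I would pull back $\omega$ under the transposition of $x_0$ and $x_1$. The transposition swaps the two summands $i=0$ and $i=1$ (up to sign from reordering the surviving wedge factors) and acts on each remaining summand $i\ge 2$ by swapping $dx_0$ and $dx_1$ inside a wedge product of four one-forms, which contributes a factor $-1$. Carrying the signs through carefully shows $\iota^\ast\omega=-\omega$, hence $\iota^\ast\Omega=-\Omega$. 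Alternatively, and perhaps more transparently, I would use the second description via Lemma~\ref{diff_form}: on the chart where $x_4$ and $\partial f/\partial x_3$ are nonzero, $\Omega$ is (up to scalar) $dx_0\wedge dx_1\wedge dx_2/(\partial f/\partial x_3)$; since $\partial f/\partial x_3$ is symmetric under swapping $x_0$ and $x_1$ while $dx_0\wedge dx_1$ changes sign under that swap, we again obtain $\iota^\ast\Omega=-\Omega$.

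Having shown $\iota^\ast\Omega=-\Omega$, I invoke rigidity exactly as in the text: $H^{3,0}(X)$ is one-dimensional and $h^{2,1}(X)=0$, so $\iota$ acts as $-1$ on $H^3(\bar X,\QQ_\ell)$. The construction of $X_d$ then proceeds by the recipe in the remark following the definition: one forms the quotient of $X_K=X_\QQ\otimes K$ by $\iota\otimes\sigma$ (where $K=\QQ(\sqrt d)$ and $\sigma$ is the nontrivial automorphism), checks that the quotient descends to $\QQ$, and identifies it as the twist. The twisting map $\theta\colon(X_d)_K\xrightarrow{\cong}X_K$ satisfies $\theta^\sigma\circ\theta^{-1}=\iota$ by construction, so the hypotheses of Theorem~\ref{thm_twist} are met and $X_d$ is a genuine twist of $X$ by $d$ for every non-square $d\in\QQ^\times$.
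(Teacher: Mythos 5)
Your proposal is correct, but it takes a genuinely different route to the existence of the twist than the paper does. Where you invoke the general quotient/descent construction from the remark following the definition (form the quotient of $X_K$ by $\iota\otimes\sigma$ and check that it descends to $\QQ$), the paper instead builds $X_d$ completely explicitly: it sets $u=x_0+x_1$, $v=x_0-x_1$, rewrites the quintic as a polynomial in $u$, $v^2$, $x_2$, $x_3$, $x_4$, namely
$$
u^5+10u^3v^2+5uv^4+16(x_2^5+x_3^5+x_4^5)-20(u^2-v^2)x_2x_3x_4=0,
$$
and then replaces $v$ by $\sqrt{d}\,v$ to obtain an equation with coefficients in $\QQ$; the variety $X_d$ arising from that equation comes with the explicit isomorphism $\theta(u,v,x_2,x_3,x_4)=(u,\sqrt{d}\,v,x_2,x_3,x_4)$ over $\QQ(\sqrt{d})$, and $\theta^{\sigma}\circ\theta^{-1}$ is visibly the involution $v\mapsto -v$, i.e., the transposition of $x_0$ and $x_1$. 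Note the direction of the logic: the paper's remark about the quotient construction is explicitly deferred to the examples (``this will become clear in the examples below''), so in the paper it is the explicit construction that substantiates the remark, not the remark that proves the proposition; your argument instead leans on general Galois-descent machinery, which is standard and valid as a standalone argument but is exactly the black box the authors chose not to rely on. What the paper's route buys is a concrete model of $X_d$ over $\QQ$ --- useful, for instance, for point counting to confirm that the attached newform really is $f_d$ rather than $f$ --- while your route is shorter and applies uniformly to any $\QQ$-rational involution without having to find adapted coordinates. Your detailed sign computation showing $\iota^{*}\Omega=-\Omega$ (via either description of $\Omega$) is a welcome expansion of the paper's one-line assertion that this is ``plain,'' and both you and the paper treat the lifting of $\iota$ and $\theta$ from the singular quintic to the small resolutions with the same level of informality, though you at least flag the issue.
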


\begin{proof} First, it is plain that $\iota$ sends the above $\Omega$ to
$-\Omega$ (using any of the two descriptions of $\Omega$.) 
  
Put $u=x_0+x_1$ and $v=x_0-x_1$. Then the equation for the quintic
equation can be written as a polynomial in $u$ and $v^2$ as follows:
$$
u^5+10u^3v^2+5uv^4+16(x_2^5+x_3^5+x_4^5)-20(u^2-v^2)x_2x_3x_4=0.
$$

Replacing $v$ by $\sqrt{d} v$, we obtain the following quintic equation:
$$
u^5+10du^3v^2+ 5d^2 uv^4 + 16(x_2^5+x_3^5+x_4^5)-20(u^2-dv^2)x_2x_3x_4=0 ,
\leqno{(\ast)}
$$
and we see how to apply Theorem \ref{thm_twist}: The equation
$(\ast)$ gives rise to a rigid Calabi--Yau threefold $X_d$ defined
over $\QQ$. Then we have an isomorphism $\theta \colon ~ X_d
\rightarrow X$ defined over $\QQ(\sqrt{d})$ and given by
$$
(u,v,x_2,x_3,x_4) \mapsto (u,\sqrt{d} v,x_2,x_3,x_4)
$$
so that $\theta^{\sigma} \circ \theta^{-1}$ is the involution given by
$(u,v,x_2,x_3,x_4) = (u,-v,x_2,x_3,x_4)$. This is precisely the
involution $\iota$ so the existence of the twist follows from Theorem
\ref{thm_twist}.
\end{proof}

\subsection{Explicit description for a holomorphic $3$-form for a complete
  intersection Calabi--Yau threefold} Before we go into further examples,
we give an explicit description of a holomorphic $3$-form for a complete
intersection Calabi--Yau threefold, by the Griffiths residue theorem or its
generalized version. We are grateful to Bert van Geemen for communicating
to us the following lemma as well as its proof. 

\begin{lemma}\label{diff_form} Let $Y=V(f_1,\cdots,f_k)$ be a complete
  intersection in $\PP^n$ of dimension $d=:n-k$ where $f_1,\ldots, f_k$ are
  homogeneous equations in the homogeneous variables $x_0,\ldots,
  x_n$. Assume that $Y$ is a normal crossings divisor. 

Let $i_0\in \{0,\ldots,n\}$, let $I\subseteq \{0,\ldots,n\} \backslash \{
i_0\}$ have cardinality $k$, and consider 
$$
D_I := \det \left(\frac{\partial f_i}{\partial x_j}\right)_{\substack{1\leq
    i\leq k \\ j\in I}} 
$$

Then, on the Zariski open set where $x_{i_0}$ and $D_I$ are both
non-vanishing, a holomorphic $d$-form is given by 
$$
\Omega= \frac{\bigwedge_{j\in \{0,\ldots,n\} \backslash (\{ i_0\} \cup I)}
  dx_j}{D_I} 
$$

If additionally $Y$ has a crepant resolution $X$ that is Calabi-Yau variety
of dimension $\dim X\leq 3$, then $\Omega$ extends to all of $X$. 
\end{lemma}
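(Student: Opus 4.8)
The plan is to establish the two assertions separately: first that $\Omega$ is a well-defined, nowhere-vanishing holomorphic $d$-form on the smooth locus of $Y$ inside the open set $\{x_{i_0}\neq 0,\ D_I\neq 0\}$, and then that this form extends across the exceptional locus of the crepant resolution.

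For the first part I would dehomogenize in the chart $x_{i_0}\neq 0$ (set $x_{i_0}=1$), so that $Y$ is cut out affinely by $f_1=\cdots=f_k=0$ in the $n$ coordinates $\{x_j\}_{j\neq i_0}$. Writing $J:=\{0,\ldots,n\}\setminus(\{i_0\}\cup I)$, a set of cardinality $d$, the hypothesis $D_I\neq 0$ says the Jacobian of $(f_1,\ldots,f_k)$ with respect to the variables indexed by $I$ is invertible, so by the implicit function theorem the functions $\{x_j\}_{j\in J}$ restrict to a local holomorphic coordinate system on $Y_{\mathrm{sm}}$ there, and $\bigwedge_{j\in J}dx_j$ is a local frame for the canonical bundle. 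To see that dividing by $D_I$ produces the canonical residue form, I would recognize $\Omega$ as the iterated Poincar\'e residue along $Y$ of the ambient meromorphic $n$-form $\omega_0/(f_1\cdots f_k)$, where $\omega_0=\bigwedge_{j\neq i_0}dx_j$. The change of variables from $(x_j)_{j\neq i_0}$ to $(f_1,\ldots,f_k,(x_j)_{j\in J})$ has Jacobian determinant equal, up to sign, to $D_I$, which yields the identity
$$
\frac{\omega_0}{f_1\cdots f_k}=\frac{df_1}{f_1}\wedge\cdots\wedge\frac{df_k}{f_k}\wedge\frac{\bigwedge_{j\in J}dx_j}{D_I},
$$
and taking residues successively along the divisors $f_i=0$ strips off the logarithmic factors and leaves exactly $\Omega|_Y$. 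Since the residue is an intrinsic operation, this simultaneously shows $\Omega$ is holomorphic and nowhere vanishing on the stated open set and is independent of the auxiliary choice of $I$ (and, using the degree balance below, of $i_0$).

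For the extension I would use that a complete intersection is Gorenstein, so the dualizing sheaf $\omega_Y$ is invertible and equals $\mathcal{O}_Y\!\left(\sum_i\deg f_i-n-1\right)$, with the residue form $\Omega$ as its canonical generator. Since $X$ is Calabi--Yau we have $\omega_X\cong\mathcal{O}_X$, and crepancy of $\pi\colon X\to Y$ means precisely $\pi^*\omega_Y\cong\omega_X$; in particular $\omega_Y$ is trivial, forcing the degree balance $\sum_i\deg f_i=n+1$. Pulling back, $\pi^*\Omega$ is a global generator of $\omega_X$; equivalently, its order of vanishing along each exceptional prime divisor is the discrepancy, which vanishes by crepancy, so $\pi^*\Omega$ acquires no pole. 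Over the codimension $\geq 2$ part of the exceptional locus I would then extend by Hartogs, using that $X$ is smooth; the restriction $\dim X\leq 3$ guarantees that the relevant crepant resolutions are small (exceptional locus of codimension $\geq 2$, such as the resolved $\PP^1$'s over the nodes of the Schoen quintic), so no codimension-one contribution arises and the Hartogs extension applies directly.

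The main obstacle is this extension step: the content lies in converting the abstract crepancy condition into the concrete statement that $\pi^*\Omega$ has neither zeros nor poles along the exceptional set. The normal-crossings and low-dimensional hypotheses are exactly what keep the singularities of $Y$ and the exceptional geometry of $\pi$ controlled enough for this discrepancy-and-Hartogs argument to go through cleanly; in higher dimensions one would have to treat exceptional divisors and non-small resolutions, where the vanishing of discrepancies must be invoked more delicately.
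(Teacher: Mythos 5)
Your proof of the first assertion is essentially the paper's own argument: both identify $\Omega$ as the iterated Poincar\'e/Griffiths residue of $\omega_0/(f_1\cdots f_k)$, and both rest on the same Jacobian identity $df_1\wedge\cdots\wedge df_k\wedge\bigwedge_{j\in J}dx_j=\pm D_I\,\bigwedge_{j\neq i_0}dx_j$ in the affine chart $x_{i_0}\neq 0$; the paper organizes the computation as an induction on $k$ through a chain $Y_k\subseteq\cdots\subseteq Y_1\subseteq\PP^n$ after first treating the hypersurface case, while you do it in one step, but the content is identical. For the extension to $X$, your route genuinely differs. The paper is very terse here: it notes that the residue is holomorphic on all of $Y$ and then invokes a surjection $\Omega^3_X\to\Omega^3_Y$ coming from the crepant resolution, so that the form on $Y$ lifts to $X$. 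You instead argue through the dualizing sheaf: $Y$ is Gorenstein with $\omega_Y\cong\mathcal{O}_Y(\sum_i\deg f_i-n-1)$ generated by $\Omega$, crepancy together with $\omega_X\cong\mathcal{O}_X$ forces $\omega_Y\cong\mathcal{O}_Y$, and zero discrepancy means the rational section $\pi^*\Omega$ of the line bundle $\omega_X$ has trivial divisor, hence is holomorphic and nowhere vanishing. Your version makes the role of crepancy explicit and justifies a step the paper leaves unexplained, at the cost of invoking Gorenstein adjunction and the discrepancy formalism.

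One claim in your final paragraph is false and should be deleted: it is not true that $\dim X\leq 3$ forces a crepant resolution to be small. The paper's own example $\tilde{V}_{33}$ refutes this: its nine singular points are locally cones over smooth cubic surfaces (one of the two defining equations is smooth at each singular point, so the singularity is a multiplicity-$3$ hypersurface point in a smooth fourfold), and the crepant resolution used there is the \emph{big} resolution, the blow-up of the singular locus, whose exceptional locus consists of prime divisors of discrepancy $4-1-3=0$. Fortunately this claim is not load-bearing in your argument: the discrepancy computation you already made handles exceptional prime divisors, and Hartogs (or simply the fact that the divisor of a rational section of a line bundle has no codimension $\geq 2$ components) handles the rest, since $\operatorname{div}_X(\pi^*\Omega)=\pi^*\operatorname{div}_Y(\Omega)+\sum_i a_iE_i=0$. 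So the extension step goes through with no case division and no smallness assumption; you should phrase it that way rather than lean on a false general statement.
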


Let us remind that a `crepant resolution' is one that does not change the
canonical class, cf.\ \cite{reid}, \S $2$. 

The Lemma applies to the Schoen quintic as well as the threefolds that we
shall consider below because the singularities involved are ordinary double
point in all cases. 

The proof is given below in the appendix (section \ref{appendix}.)

\subsection{Two rigid Calabi--Yau threefolds of Werner and van
  Geemen} \label{wvg} Werner and van Geemen \cite{vG} constructed a
number of examples of rigid Calabi--Yau threefolds over $\QQ$. They
are complete intersection Calabi--Yau threefolds.

We consider two of them. First, the rigid Calabi--Yau threefold
denoted by $\tilde{V}_{33}$: Let $V_{33}\subset\mathbb{P}^5$ be the
threefold defined by the system of equations
$$
\begin{array}{ccccccccccccl}
x_0^3&+&x_1^3&+&x_2^3&+&x_3^3&&&&&=& 0 \\
&&&&            x_2^3&+&x_3^3&+&x_4^3&+&x_5^3&=&0 .\\
\end{array}
$$

$V_{33}$ has $9$ singularities, and let $\tilde{V}_{33}$ be the blow up of
$V_{33}$ along its singular locus (big resolution). Then $\tilde{V}_{33}$
is a rigid Calabi--Yau threefold over $\QQ$, and it is modular with a
corresponding newform $f$ of weight $4$ on $\Gamma_0(9)$: 
$$
f(q)=\eta(q^3)^8.
$$

It is shown by Kimura \cite{K} that if $E\subset\PP^2$ is the curve
defined by $x_0^3+x_1^3+x_2^3=0$, then there is a dominant rational
map from $E^3$ to $V_{33}$ of degree $3$. Consequently, the $L$-series
coincide. By Lemma \ref{diff_form}, a holomorphic $3$-form of $V_{33}$ is
given in affine coordinates by 
$$
\Omega=\frac{d x_2\wedge dx_3\wedge dx_4}{x_1^2x_5^2}.
$$

\begin{prop} For any non-square $d\in\QQ^{\times}$ the rigid Calabi--Yau
  threefold $\tilde{V}_{33}$ has a twist $\tilde{V}_{33,d}$ by $d$. The
  corresponding involution $\iota$ is defined by permuting $x_2$ and
  $x_3$. 
\end{prop}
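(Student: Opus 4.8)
The plan is to follow the template established above for the Schoen quintic, taking advantage of the fact that $x_2$ and $x_3$ enter both defining equations only through the symmetric combination $x_2^3+x_3^3$. First I would observe that the transposition $\iota\colon x_2\leftrightarrow x_3$ (fixing $x_0,x_1,x_4,x_5$) is a $\QQ$-automorphism of $V_{33}$, since both equations are manifestly invariant under it, and that it descends to the big resolution $\tilde V_{33}$. Applying $\iota$ to the holomorphic $3$-form $\Omega=(dx_2\wedge dx_3\wedge dx_4)/(x_1^2x_5^2)$ of Lemma \ref{diff_form}, the numerator acquires a sign from $dx_2\wedge dx_3\mapsto dx_3\wedge dx_2$ while the denominator is unchanged, so $\iota^*\Omega=-\Omega$. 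Because $\tilde V_{33}$ is rigid, i.e.\ $h^{2,1}=0$, the discussion in Section \ref{twists} then forces $\iota$ to act as $-1$ on all of $H^3(\overline{\tilde V}_{33},\QQ_\ell)$, which supplies the first hypothesis of Theorem \ref{thm_twist}.

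Next I would diagonalize $\iota$ by the $\QQ$-linear change of homogeneous coordinates $u=x_2+x_3$, $v=x_2-x_3$, under which $\iota$ becomes $(u,v)\mapsto(u,-v)$. Since $x_2^3+x_3^3=\frac14(u^3+3uv^2)$ is a polynomial in $u$ and $v^2$ alone, the two equations take the form $x_0^3+x_1^3+\frac14(u^3+3uv^2)=0$ and $\frac14(u^3+3uv^2)+x_4^3+x_5^3=0$. Replacing $v^2$ by $dv^2$ throughout produces a system still having rational coefficients, and I would define $V_{33,d}\subset\PP^5$ by it and let $\tilde V_{33,d}$ be the corresponding resolution. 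The isomorphism $\theta\colon\tilde V_{33,d}\to\tilde V_{33}$ over $K=\QQ(\sqrt d)$ is then $(x_0,x_1,u,v,x_4,x_5)\mapsto(x_0,x_1,u,\sqrt d\,v,x_4,x_5)$; since $v$ occurs only through $v^2$, one checks directly that $\theta$ carries the twisted equations to the original ones, and that $\theta^\sigma\circ\theta^{-1}$ is precisely the sign change $v\mapsto-v$, that is, $\iota$. Theorem \ref{thm_twist} then yields that the newform attached to $\tilde V_{33,d}$ is the quadratic twist $f_d$ of $f=\eta(q^3)^8$.

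The one point demanding genuine care, rather than routine computation, is to confirm that $\tilde V_{33,d}$ really is a rigid Calabi--Yau threefold defined over $\QQ$ (so that the resolution exists and Theorem \ref{thm_twist} applies). I would settle this geometrically: because $\theta$ is an isomorphism defined over $K$, the base change $(V_{33,d})_K$ is isomorphic to $(V_{33})_K$, so $V_{33,d}$ is a $\QQ$-form of $V_{33}$ whose singular locus is a $\QQ$-rational zero-dimensional subscheme consisting of the same nine ordinary double points. Blowing up this $\QQ$-subscheme produces the $\QQ$-variety $\tilde V_{33,d}$, whose extension to $K$ recovers the smooth rigid Calabi--Yau threefold $\tilde V_{33}$. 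Smoothness, dimension, triviality of the canonical bundle and the vanishing $h^{2,1}=0$ are all insensitive to the base field, so they descend and $\tilde V_{33,d}$ is a rigid Calabi--Yau threefold over $\QQ$, with Lemma \ref{diff_form} again furnishing its holomorphic $3$-form. The other threefolds of Werner and van Geemen should admit entirely analogous treatment whenever they carry a coordinate symmetry acting by $-1$ on $\Omega$.
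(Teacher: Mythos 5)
Your proposal is correct and follows essentially the same route as the paper: the change of coordinates $u=x_2+x_3$, $v=x_2-x_3$ turning both equations into polynomials in $u$ and $v^2$, the substitution $v^2\mapsto dv^2$ to define $\tilde V_{33,d}$, the identification of $\theta^\sigma\circ\theta^{-1}$ with the transposition $x_2\leftrightarrow x_3$, the sign change of $\Omega$ from Lemma \ref{diff_form}, and the appeal to Theorem \ref{thm_twist}. Your third paragraph merely makes explicit a verification (that $\tilde V_{33,d}$ is indeed a rigid Calabi--Yau threefold over $\QQ$) which the paper leaves implicit in the phrase ``gives rise to $\tilde V_{33,d}$,'' so it is a welcome elaboration rather than a different argument.
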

\begin{proof} Put $u=x_2+x_3,\, v=x_2-x_3$. Then the equation for $V_{33}$
  can be expressed in terms of $x_0,x_1,x_4,x_5$ and $u$ and $v^2$: 
  $$
\begin{array}{ccccccccccccl}
4x_0^3&+&4x_1^3&+&u^3&+&3uv^2&&&&&=& 0 \\
&&&&             u^3&+&3uv^2&+&4x_4^3&+&4x_5^3&=&0 .\\
\end{array}
$$

Replacing $v$ by $\sqrt{d} v$ in this system we obtain a system of
equations that gives rise to $\tilde{V}_{33,d}$. Applying Theorem
\ref{thm_twist} shows that $\tilde{V}_{33,d}$ is twist by $d$ of
$\tilde{V}_{33}$ with the corresponding involution given by $v\mapsto
-v$, i.e., by $(x_2,x_3) \mapsto (x_3,x_2)$.

The holomorphic $3$-form $\Omega$ above clearly changes sign when
$x_2$ and $x_3$ are interchanged.
\end{proof}

Secondly, we can consider the rigid Calabi--Yau threefold denoted by
$\tilde{V}_{24}$: Let $V_{24}\subset\mathbb{P}^5$ be the threefold
defined by the equations:
$$
\begin{array}{ccccccccccccc}
x_0^2&+&x_1^2&+&x_2^2&-&x_3^2&-&x_4^2&-&x_5^2&=& 0 \\
x_0^4&+&x_1^4&+&x_2^4&-&x_3^4&-&x_4^4&-&x_5^4&=&0 .\\
\end{array}
$$
Then $V_{24}$ has $122$ nodes (ordinary double points) as
only singularities.  Let $\tilde{V}_{24}$ be the blow up of $V_{24}$
along its singular locus (small resolution).
Then $\tilde{V}_{24}$ is a rigid Calabi--Yau threefold over
$\QQ$, and it is modular with a corresponding newform $g$ which is the
newform of weight $4$ on $\Gamma_0(12)$.

\begin{prop} For any non-square $d\in\QQ^{\times}$ the rigid
  Calabi--Yau threefold $\tilde{V}_{24}$ has a twist
  $\tilde{V}_{24,d}$ by $d$. The corresponding involution $\iota$ is
  given by:
$$
\iota: x_1\mapsto -x_1 \quad \mbox{(or $x_2\mapsto -x_2)$}
$$
and all other coordinates fixed with $x_0\neq 0$.
\end{prop}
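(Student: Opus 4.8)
The plan is to follow the template of the two preceding propositions, while exploiting a structural simplification special to $V_{24}$: its defining equations involve each coordinate only through its \emph{even} powers, $x_i^2$ and $x_i^4$. Consequently, unlike the Schoen quintic or $\tilde{V}_{33}$, no auxiliary linear substitution of the form $u=x_i+x_j,\ v=x_i-x_j$ is needed to make one variable appear evenly; the variable $x_1$ is already even, so we may twist it directly.

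First I would write down the candidate twist $V_{24,d}$, obtained by replacing $x_1^2$ by $dx_1^2$ throughout (equivalently, by substituting $x_1\mapsto\sqrt{d}\,x_1$):
$$x_0^2 + d x_1^2 + x_2^2 - x_3^2 - x_4^2 - x_5^2 = 0, \qquad x_0^4 + d^2 x_1^4 + x_2^4 - x_3^4 - x_4^4 - x_5^4 = 0.$$
These have rational coefficients, so $V_{24,d}$ is defined over $\QQ$, and the map $\theta\colon (x_0,\ldots,x_5)\mapsto(x_0,\sqrt{d}\,x_1,x_2,x_3,x_4,x_5)$ is an isomorphism $V_{24,d}\to V_{24}$ defined over $K=\QQ(\sqrt{d})$. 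A direct computation then gives $\theta^{\sigma}\circ\theta^{-1}\colon x_1\mapsto -x_1$, which is exactly the involution $\iota$ of the statement. Since $\theta$ is an isomorphism over $\QQbar$, the variety $V_{24,d}$ acquires the same $122$ nodes as $V_{24}$ and a small resolution $\tilde{V}_{24,d}$, again a rigid Calabi--Yau threefold over $\QQ$.

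Next I would check that $\iota$ sends the holomorphic $3$-form to its negative, which by the rigidity discussion following the definition of a twist is enough to guarantee that $\iota$ acts as $-1$ on all of $H^3(\bar{X},\QQ_{\ell})$. Applying Lemma \ref{diff_form} with $i_0=0$ (the chart $x_0\neq 0$) and, for instance, $I=\{4,5\}$, one gets
$$\Omega=\frac{dx_1\wedge dx_2\wedge dx_3}{D_{\{4,5\}}},$$
where the denominator $D_{\{4,5\}}$ depends only on $x_4$ and $x_5$. As $\iota$ fixes this denominator and sends $dx_1$ to $-dx_1$, we obtain $\iota^{\ast}\Omega=-\Omega$; the same holds with $x_2$ in place of $x_1$. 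Theorem \ref{thm_twist} then identifies the newform attached to $\tilde{V}_{24,d}$ as $g_d$, the twist of $g$ by the quadratic character of $K/\QQ$, so that no point counting is required here.

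The algebraic verifications---that $\theta$ is an isomorphism and that $\theta^{\sigma}\circ\theta^{-1}=\iota$---are immediate from the even-power structure, as is the computation on $\Omega$. The one genuinely delicate step is the geometric input I am importing rather than proving: that the nodal $V_{24,d}$ admits a small resolution over $\QQ$ yielding a rigid Calabi--Yau threefold. This is inherited from the corresponding fact for $V_{24}$ through the $\QQbar$-isomorphism $\theta$, and I expect it to be the point most in need of care, exactly as for the untwisted $\tilde{V}_{24}$.
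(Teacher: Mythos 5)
Your proof is correct and takes essentially the same approach as the paper: twist by replacing $x_1^2,x_1^4$ with $dx_1^2,d^2x_1^4$, note that $\theta^{\sigma}\circ\theta^{-1}$ is the stated involution, and verify $\iota^{\ast}\Omega=-\Omega$ via Lemma \ref{diff_form} so that Theorem \ref{thm_twist} applies. The only (immaterial) difference is your choice $I=\{4,5\}$, which puts $dx_1$ in the numerator of $\Omega$, whereas the paper takes the form $\Omega=\frac{dx_3\wedge dx_4\wedge dx_5}{8x_1x_2^3-8x_1^3x_2}$ (i.e.\ $I=\{1,2\}$) so that the sign change comes from the denominator; both computations give the same conclusion.
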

\begin{proof} Replacing $x_1^2$ and $x_1^4$ in the defining equations for
  $\tilde{V}_{24}$ by $dx_1^2$ and $d^2x_1^4$, respectively, we get a
  system of equations that give rise to $\tilde{V}_{24,d}$ isomorphic to
  $\tilde{V}_{24}$ over $\QQ(\sqrt{d})$. The corresponding involution
  $\iota$ is clearly as stated. A holomorphic $3$-form on $V_{24}$ is given
  by 
$$
\Omega=\displaystyle\frac{dx_3\wedge dx_4\wedge dx_5}{8x_1x_2^3-8x_1^3x_2}
$$
and under the involution $x_1\mapsto -x_1$, $\Omega$ is mapped to $-\Omega$.
\end{proof}

\subsection{The rigid Calabi--Yau threefold of van Geemen
and Nygaard} Another interesting example is the case of the
rigid Calabi--Yau threefold of van Geemen and Nygaard.
In \cite{vgn}, van Geemen and Nygaard gave an example of a
rigid Calabi-Yau threefold defined over $\QQ$: Let
$Y\subset\PP^7$ be the complete intersection of the four
quadrics:
$$
\begin{array}{ccl}
y_0^2 &=& x_0^2 + x_1^2 + x_2^2 + x_3^2 \\
y_1^2 &=& x_0^2 - x_1^2 + x_2^2 - x_3^2 \\
y_2^2 &=& x_0^2 + x_1^2 - x_2^2 - x_3^2 \\
y_3^2 &=& x_0^2 - x_1^2 - x_2^2 + x_3^2 .
\end{array}
$$

The variety $Y$ has $96$ isolated singularities, which are ordinary
double points.  Let $X$ be a (small) blow-up of $Y$ along its singular
locus.  (A recent article of Freitag and Salvati-Manni \cite{fsm}
asserts that $Y$ admits a resolution that is a projective Calabi--Yau
threefold, $X$.) Then $X$ is a rigid Calabi--Yau threefold over
$\QQ$. Its attached newform is the unique newform of weight $4$ on
$\Gamma_0(8)$, cf.\ \cite{vgn}, Theorem 2.4. Notice that there is a
misprint in the equations on p.\ 56 of that paper: In the second
equation $x_3^2$ should occur with a minus sign as above rather than a
plus sign as in \cite{vgn}, p.\ 56. This is evident from the theta
relations on p.\ 54 of \cite{vgn}.

Again, we instantly see the existence of twists of $X$ via replacing
$x_0^2$ by $dx_0^2$ in the above equations. Thus:

\begin{prop} For any non-square $d\in\QQ^{\times}$ the above rigid
  Calabi--Yau threefold $X$ has a twist $X_d$ by $d$. The
  corresponding involution $\iota$ is given by
$$ x_0\mapsto -x_0
$$
and all other coordinates fixed.
\end{prop}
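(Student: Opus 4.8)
The plan is to mirror the two preceding propositions and reduce to Theorem~\ref{thm_twist}. First I would form the twisted variety $Y_d \subset \PP^7$ by replacing $x_0^2$ with $dx_0^2$ in each of the four defining quadrics, and let $X_d$ denote the corresponding small resolution. Over $K = \QQ(\sqrt{d})$ the substitution $x_0 \mapsto \sqrt{d}\,x_0$, fixing all other coordinates, carries the equations of $Y_d$ into those of $Y$ and hence defines an isomorphism $\theta \colon (X_d)_K \stackrel{\cong}{\to} X_K$ over $K$. Since $Y_d$ becomes isomorphic to $Y$ after base change to $K$, it acquires the same $96$ ordinary double points, and the small resolution together with the rigidity and Calabi--Yau properties transport across $\theta$; thus $X_d$ is again a rigid Calabi--Yau threefold over $\QQ$.

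Next I would identify the involution. Applying $\sigma$ to the coefficient $\sqrt{d}$ turns $\theta$ into the substitution $x_0 \mapsto -\sqrt{d}\,x_0$, so composing gives
$$
\theta^{\sigma} \circ \theta^{-1} \colon \ x_0 \mapsto -x_0, \quad \text{all other coordinates fixed,}
$$
which is exactly the $\iota$ of the statement. Because $x_0$ enters the defining equations only through $x_0^2$, this $\iota$ genuinely preserves $Y$ and extends to an involution of $X$.

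It remains to verify that $\iota$ acts as $-1$ on $H^3(\bar X, \QQ_\ell)$. By the discussion following the definition of a twist, and using that $X$ is rigid, it is enough to check that $\iota$ sends the holomorphic $3$-form $\Omega$ to $-\Omega$. I would read $\Omega$ off Lemma~\ref{diff_form} taking $n = 7$, $k = 4$, and $I$ the set of the four $y$-coordinates: the Jacobian $(\partial f_i/\partial y_j)$ is diagonal, so $D_I = 16\,y_0y_1y_2y_3$, and choosing $i_0$ among $x_1, x_2, x_3$ gives
$$
\Omega = \frac{dx_0 \wedge dx_2 \wedge dx_3}{16\,y_0y_1y_2y_3}
$$
on the locus where the chosen coordinate and $D_I$ are nonzero. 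Under $\iota$ the numerator factor $dx_0$ changes sign while the $y_j$ are fixed, so $\Omega \mapsto -\Omega$. With this in hand, Theorem~\ref{thm_twist} applies verbatim and identifies $X_d$ as a twist of $X$ by $d$.

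I expect the only genuine subtlety to be the assertion that $Y_d$ admits a small resolution that is a rigid Calabi--Yau threefold defined over $\QQ$, together with the compatibility that lets $\theta$ descend to the stated $K$-isomorphism. Over $K$ everything is immediate from $\theta$, but one must choose the resolution in a $\QQ$-rational way, which rests on the Galois action permuting the $96$ nodes compatibly; this is precisely the point handled implicitly in the earlier examples. The residue computation for $\Omega$ and the sign check, by contrast, are entirely mechanical.
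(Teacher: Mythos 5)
Your proposal is correct and follows essentially the same route as the paper: twist by replacing $x_0^2$ with $dx_0^2$, take $\theta\colon x_0\mapsto\sqrt{d}\,x_0$ so that $\theta^{\sigma}\circ\theta^{-1}$ is $x_0\mapsto -x_0$, compute $\Omega = dx_0\wedge dx_2\wedge dx_3/(2^4y_0y_1y_2y_3)$ via Lemma~\ref{diff_form}, observe the sign flip, and invoke Theorem~\ref{thm_twist}. If anything, you are more careful than the paper (which only checks the action on $\Omega$ and treats the rest as immediate), particularly in flagging the $\QQ$-rationality of the small resolution; your only slip is cosmetic, since the displayed form $dx_0\wedge dx_2\wedge dx_3$ corresponds specifically to $i_0 = x_1$, not to an arbitrary choice among $x_1,x_2,x_3$.
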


\begin{proof} The only thing we need to check is whether a holomorphic
  $3$-form is send by $\iota$ to $-\Omega$. Let
$$
f_0:=y_0^2-(x_0^2+x_1^2+x_2^2+x_3^2),
$$
and similarly, define $f_1, f_2$ and $f_3$ by the second, third and
the fourth equation, respectively. Then $\Omega$ may be given by
$$
\Omega=\frac{dx_0\wedge dx_2\wedge dx_3}{D}
$$
where
$$
D= \det \left(\frac{\partial f_i}{\partial y_j}\right)_{0\leq i, j\leq
  3} =2^4y_0y_1y_2y_3.
$$

Thus the involution given by $\iota: x_0\mapsto -x_0$ and fixing all
other coordinates will send $\Omega$ to $-\Omega$.
\end{proof}

\section{Remarks on the levels of twists} Suppose that $d\in\ZZ$ is
squarefree and suppose that $f$ is a newform of level $N$. One may ask
about the level of the twisted newform $f_d$. Viewed from the Galois
representation side, this amounts to asking for the conductor of $\rho
\otimes \chi$ where $\rho$ has conductor $N$ and $\chi$ is a
(quadratic) character of conductor $D$, say (so $D$ divides $4d$ in
the above setup).

As is well-known, the answer is $ND^2$ if $(N,D)=1$. (This can be
proved either via basic theory of conductors, or, alternatively, more
directly via the theory of modular forms.) When $(N,D) > 1$, however,
the question has no simple answer: the level of the twisted
representation will depend heavily on the behavior of the
representation $\rho$ at inertia groups over common prime divisors of
$N$ and $D$. Nevertheless, see \cite{kiming}, \cite{kiming_verrill}
for the cases where $\rho$ has a `small' image.

For the concrete examples of twisting that we have discussed in this
paper, a more modest question can be asked, namely whether the newform
that we start with is `twist minimal' or not, i.e., whether it has the
lowest level among all of its quadratic twists. This question can be
easily answered with a little computation.

Let us for example consider the case of the Schoen quintic. By
Proposition 5.3 of \cite{S_quintic} (and its proof) one has that the
attached newform $f$ is of weight $4$ on $\Gamma_0(25)$ given as an
explicit linear combination of certain $\eta$-products. From this
explicit description of $f$ one computes that the coefficient of $q^2$
in its $q$-expansion is $-84$.

On the other hand, there is a unique newform $f_0$ of weight $4$ on
$\Gamma_0(5)$, namely $f_0(z) := \eta(z)^4 \eta(5z)^4$. We compute
that the coefficient of $q^2$ in the $q$-expansion of $f_0$ is $-4$.

Since $-4 \neq \pm (-84)$ we can deduce that $f_0$ is not a twist of
$f$ and hence that $f$ is twist minimal. In particular, the unique
newform $f_0$ of level $4$ is not the form attached to a twist of the
Schoen quintic. Is it attached to any Calabi--Yau threefold?

\section{Final remarks}\label{final}

\subsection{} As we implied in the introduction, the main contribution
of this paper is to put focus on the question whether any rigid Calabi--Yau
threefold over $\QQ$ has a twist by $d$ for any non-square $d\in\QQ$. In
contrast with the classical situation involving elliptic curves, the
question for rigid Calabi--Yau threefolds over $\QQ$ seems genuinely more
difficult. One difference is that one does not know in general the
automorphism group of a rigid Calabi--Yau threefold over $\QQ$. Another
point is the poor understanding of the conductor of a rigid Calabi--Yau
threefold over $\QQ$, i.e., the level of its associated newform.

For many other cases than the ones we have considered here, the existence
of quadratic twists of a given Calabi--Yau threefold over $\QQ$ can be
shown along the same lines as above, i.e., inspection of the defining
equation(s) combined with an application of Theorem \ref{thm_twist}.  For
instance, one can try to show the existence of quadratic twists for many
rigid Calabi--Yau threefolds over $\QQ$ discussed in Meyer \cite{Meyer}.

However, in some cases the question does not seem as easy. For instance,
does the rigid Calabi--Yau threefold of Hirzebruch (Theorem 5.11 in Yui
\cite{Y03}) admit quadratic twists? Let $X_0$ be the quintic threefold
defined over $\QQ$ by the equation $F(x,y)-F(u,w)=0$ where 
$$
F(x,y)=\left(x+\frac{1}{2}\right)\left(y^4-y^2(2x^2-2x+1)+
\frac{1}{5}(x^2+x-1)^2\right). 
$$

Then $X_0$ has $126$ nodes (ordinary double points) as only singularities.
Let $X$ be the blow up of $X_0$ along its singular locus.  Then $X$ is a
rigid Calabi--Yau threefold defined over $\QQ$ with the Euler
characteristic $306$. The map sending $y$ to $-y$ gives rise to an
involution on $X$ and this raises the obvious question of whether this
induces a non-trivial involution on $H^3$ so that we get a quadratic twist
of $X$ by replacing $y^2$ by $dy^2$.

\subsection{} Does there exist a rigid Calabi--Yau threefold $X$ defined
over $\QQ$ and a non-square rational $d$ such that $X$ does not have a
quadratic twist $X_d$ by $d$? 

Perhaps, in order to approach this question, one needs to loosen the
definition of `quadratic twist by $d$' so that the existence of $X_d$
becomes {\it equivalent to} (rather than just implying) the existence of a
rigid Calabi--Yau threefold over $\QQ$ whose attached $\ell$-adic Galois
representation (for some prime $\ell$) is the twist by the quadratic
character of $\QQ(\sqrt{d})/\QQ$ of the $\ell$-adic representation attached
to the original threefold. Maybe this is possible by considering, more
generally, algebraic correspondences rather than isomorphisms in the
setting of Theorem \ref{thm_twist}. 

Calabi--Yau threefolds, even rigid ones, in general, may not have
involutions; even when they do, it might be rather difficult to find
one. So geometric realization of modular forms of weight $4$ on some
$\Gamma_0(N)$ with integral Fourier coefficients along our proposed
approach may in fact not be overly promising. But this remark once again
raises the question of when the kind of twisting that we have discussed in
this paper is possible. 

\subsection{} We should include here a description of the fixed point
set of the involution $\iota$ acting on a rigid Calabi--Yau threefold
(though we did not make use of it in the examples.) The following
observation is due to B. van Geemen.

Let $X$ be a Calabi--Yau threefold over $\QQ$. Let $\iota$ be an
involution acting on $X$. Then the fixed point set of $\iota$ on
$X$ is determined as follows.

Suppose that $p$ is a fixed point of $\iota$, then in suitable local
coordinates $z_i, i=1,2,3$, $z_i(p)=0$, and
$$
\iota(z_1, z_2, z_3)=(e_1z_1, e_2z_2, e_3z_3) \quad\mbox{where $e_i=\pm 1$},
$$
and the dimension of the fixed point set is thus the number of $e_i$'s
which are equal to $+1$ (so if $\iota\neq 1$, then at least one $e_i$
must be $-1$.)

If $\Omega$ is the nowhere vanishing holomorphic $3$-form in
$H^{3,0}(X)$, then in these coordinates,
$$
\Omega=f(z_1, z_2, z_3) dz_1\wedge dz_2\wedge dz_3\quad\mbox{and $f(0,0,0)\neq 0$.}
$$
But $f(0,0,0)\neq 0$ forces that
$$
f(e_1z_1, e_2z_2, e_3z_3)=+f(z_1, z_2, z_3),
$$ and hence
$$
\iota^*\Omega=e_1e_2e_3\Omega.
$$

>From this we see that if the fixed point locus consists of isolated
points and divisors, then all or exactly one of the $e_i$ are $-1$,
else two of the $e_i=+1$ and the one $-1$.

\section{Appendix}\label{appendix}
\begin{proof}[Proof of Lemma \ref{diff_form}] Permuting variables if
  necessary we may, and will, assume $i_0 = 0$ and $I = \{ 1,\ldots
  ,k\}$. We put $D:=D_I$ with this particular $I$. 

Let us first recall some general facts about the Griffiths residue map:
Suppose that $V$ is a smooth projective variety of dimension $n$ and that
$W$ is a smooth codimension $1$ subvariety, or, more generally, a normal
crossings divisor. The Griffiths residue map (see for instance the proof of
Prop.\ 8.32 in \cite{voisin}) is a surjective homomorphism 
$$
\Res : ~ \Omega_V^n(\log W) \rightarrow \Omega_W^{n-1}
$$
defined as follows: If $(U,(z_1,\ldots,z_n))$ is a complex chart
of $V$ such that $W$ is given locally by the equation $z_1=0$ then
$\Omega_V^n(\log W)_{\mid U}$ is the free sheaf of ${\mathcal O}_U$-modules
generated by 
$$
\alpha := \frac{dz_1}{z_1} \wedge dz_2 \wedge \ldots \wedge dz_n ,
$$
and $\Res$ is then defined locally by
$$
\Res (g\alpha) := (g dz_2 \wedge \ldots \wedge dz_n)_{U\cap W} .
$$

Notice that $(U\cap W , (z_1,\ldots,z_{n-1})_U)$ is a complex chart of $W$.

Now, for the proof of the Lemma, let us first consider the case $k=1$ where
we specialize the above to the situation $V=\PP^n$ and $W=Y$ the
hypersurface given by the equation $f_1=0$. Consider the open set $U_0$
where $x_0 \neq 0$, let $z_i := x_i/x_0$ on $U_0$, and let $F := f_1/x_0^t$
where $t$ is the degree of $f_1$. Then $\omega_n := dz_1 \wedge \ldots
\wedge dz_n$ is a generator of $\Omega_{U_0}^n$. 

The open subset $U_0'$ of $U_0$ where $\partial F/\partial z_1 \neq 0$
coincides with the open subset where $\partial f_1/\partial x_1 \neq 0$. On
$U_0'$ we have local coordinates $F,z_2,\ldots,z_n$. Since 
$$
dF = \sum_{i=1}^n (\partial F/\partial z_i) dz_i
$$
we find
$$
dF \wedge dz_2 \wedge \ldots \wedge dz_n = \left( \frac{\partial F}{\partial z_1} \right) \omega_n
$$
so that
$$
\Res (\frac{\omega_n}{F})_{\mid U_0'} = \Res ( \frac{dF \wedge dz_2 \wedge \ldots \wedge dz_n}{F \left( \frac{\partial F}{\partial z_1}\right)} )_{\mid U_0'} = \left( \frac{dz_2 \wedge \ldots \wedge dz_n}{\left( \frac{\partial F}{\partial z_1}\right)} \right)_{\mid U_0'}
$$
which coincides up to a power of $x_0$ with
$$
\frac{dx_2 \wedge \ldots \wedge dx_n}{\left( \frac{\partial f_1}{\partial x_1}\right)}
$$
on $U_0'$.

Since the residue is holomorphic on all of $Y$, this differential form on
$U_0'$ will extend holomorphically to all of $Y$. 
\smallskip

For the general case, one can argue inductively with respect to $k$: We see
$Y$ as the end of a chain $Y:=Y_k \subseteq \ldots \subseteq Y_1 \subseteq
\PP^n$ where each $Y_i$ is a codimension $1$ subvariety of the $Y_{i-1}$
defined by the equation $f_i=0$. Dividing by suitable powers of $x_0$ to
define $F_i$ from $f_i$ as above and retaining local coordinates $z_i :=
x_i/x_0$ for $i>0$, the conclusion is now that we get a holomorphic
$3$-form on $U_0$ (where $x_0\neq 0$) by taking the residue of the form
$\omega_n/(F_1\cdots F_k)$. 

If we define ${\mathcal D} := \det \left( \frac{\partial F_i}{\partial z_j}
\right)_{1\le i,j\le k}$ then 
$$
dF_1 \wedge \ldots dF_k \wedge dz_{k+1} \wedge \ldots \wedge dz_n =
{\mathcal D} \cdot dz_1 \wedge \ldots \wedge dz_n 
$$
as $dF_j = \sum_{i=1}^n (\partial F_j/\partial z_i) dz_i$, and by the
definition of the determinant and alternating property of wedge
products. Redefining $U_0'$ as the open subset of $U_0$ where ${\mathcal D}
\neq 0$ then $U_0'$ coincides with the open subset of $U_0$ where $D\neq 0$
as $D$ differs from ${\mathcal D}$ by a power of $x_0$. 

Thus, on $U_0'$ we can compute the above residue:
\begin{eqnarray*}
\Res( \frac{\omega_n}{F_1\cdots F_k} )_{\mid U_0'} &=& \Res ( \frac{dF_1
  \wedge \ldots dF_k \wedge dz_{k+1} \wedge \ldots \wedge dz_n}{F_1\cdots
  F_k {\mathcal D}} )_{\mid U_0'} \\ 
&=& \left( \frac{dz_{k+1} \wedge \ldots \wedge dz_n}{{\mathcal D}}
  \right)_{\mid U_0'} 
\end{eqnarray*}
which coincides with
$$
\frac{dx_{k+1} \wedge \ldots dx_n}{D}
$$
up to a power of $x_0$ on $U_0'$.

Again this form extends to all of $Y$ for the same reasons as in the case
$k=1$. 
\medskip

Now suppose that $Y$ has a crepant resolution $X$ that is Calabi-Yau
variety of dimension $\dim X\leq 3$. There is then a surjective map
$\Omega^3_X \rightarrow \Omega_Y^3$. Hence the holomorphic $3$-form on $Y$
that we constructed above on $Y$ extends to a holomorphic $3$-form on $X$. 
\end{proof}

\section{Acknowledgments}

We would like to thank Bert van Geemen, Ken-Ichiro Kimura, James D. Lewis,
and Matthias Sch\"utt for helpful discussions on this topic. 

We would also like to thank the referee for going meticulously through the
manu\-script and pointing out a number of inaccuracies in the previous
version. 

The first author thanks Queen's University for its hospitality when
this work was done and Colby College for research funding. The second
author thanks The Danish Council for Independent Research for
support. The third author thanks the Natural Sciences and Engineering
Research Council of Canada (NSERC) for their support.

\end{document}